\documentclass[11pt]{article}
\usepackage[margin=1.2in]{geometry}
\usepackage{amsmath,amssymb,amsthm}
\usepackage[T1]{fontenc}

\theoremstyle{plain}
\newtheorem{theorem}{Theorem}[section]
\newtheorem{proposition}[theorem]{Proposition}
\newtheorem{lemma}[theorem]{Lemma}
\newtheorem{corollary}[theorem]{Corollary}
\newtheorem*{theoremA}{Theorem A}
\newtheorem*{theoremB}{Theorem B}
\theoremstyle{definition}
\newtheorem{definition}[theorem]{Definition}
\newtheorem{remark}[theorem]{Remark}
\newtheorem{example}[theorem]{Example}

\newcommand{\On}{\mathcal{O}_n}
\newcommand{\im}{\operatorname{im}}

\newcommand{\Rel}{\mathrel{R}}
\newcommand{\Rcl}{\mathcal{R}}
\newcommand{\Lcl}{\mathcal{L}}
\newcommand{\Dcl}{\mathcal{D}}
\newcommand{\Hcl}{\mathcal{H}}

\title{Balanced weightings and permutation matchings\\ applied to the monoid of order-preserving mappings}
\date{}
\author{Peter M. Higgins\\
School of Mathematics, Statistics, and Actuarial Science \\
University of Essex, Colchester CO4 3SQ U.K. \\
emails: peteh@essex.ac.uk \\
}

\begin{document}
\maketitle

\begin{abstract}
We give a necessary and sufficient condition for a combinatorial $\Dcl$-class of a finite semigroup to have a permutation matching: that its structure matrix admit a \emph{balanced weighting}, that is, non-negative weights with constant row sums and constant column sums. This weakens the regularity hypothesis of \cite[Lemma 2.9]{H14}, the role of Hall's lemma being taken up by the Birkhoff--von Neumann theorem. We then apply it to show that $\On$, the monoid of all order-preserving self-maps of an $n$-element chain, has a permutation matching, answering Question~1 of \cite{H24}. The weighting there counts the ways of deleting one entry from a multiset.
\end{abstract}

\section{Introduction}

We assume a general background in Green's relations as explained in the text \cite{Howie}. Following \cite{H14}, a \emph{(permutation) matching} of a finite regular semigroup $S$ is a permutation $\sigma$ of $S$ with $\sigma(x)$ an inverse of $x$ for all $x$; since all inverses of $x$ lie in $D_x$, the notion of matching also applies to $\mathcal{D}$-classes. Not every finite regular semigroup has a matching, the minimal failure being a seven-element $0$-rectangular band \cite[Example 1.3]{H14}; by \cite[Proposition 1.2]{H14} the obstruction is the Hall condition $|A| \le |V(A)|$ for all $A \subseteq S$, where $V(A)$ denotes the set of inverses of members of $A$, and by \cite[Theorem 1.6]{H14} the question reduces to that of $0$-rectangular bands $(D_x \cup \{0\})/\Hcl$.

The positive results of \cite{H14} rest on a regularity argument: a bipartite graph regular of degree $m$ satisfies Hall's condition, since $k$ vertices on one side carry $km$ edges into their neighbourhood, and this is what eventually yields a permutation matching for the full transformation monoid, $T_n$ \cite[Lemma 2.9 and Theorem 2.12]{H14}. Our first theorem is a general result that shows how regularity may be weakened to a weighted form of itself, at the cost of replacing Hall's lemma by Birkhoff's theorem.

Throughout we work with a fixed combinatorial regular $\mathcal{D}$-class $J$ of a finite semigroup, index its $\mathcal{R}$-classes by $P$ and its $\mathcal{L}$-classes by $Q$, identify $J$ with $P\times Q$, and define
\[
R \;=\; \{\,(A,B) \in P \times Q :  (A,B) \text{ is idempotent}\,\},
\]
noting that every $\mathcal{R}$-class and every $\mathcal{L}$-class of $J$
contains an idempotent, because $J$ is regular, and that $R$ may be identified with the binary matrix that is the transpose of the structure matrix, in the Rees sense, of the
$0$-rectangular band $J \cup \{0\}$. The distinction is immaterial here: a
function $w$ is a balanced weighting of $R$ with constants $(a,b)$ if and only
if its transpose is a balanced weighting of $R^{\mathsf T}$ with constants
$(b,a)$, and so the condition appearing in Theorem~A is unaffected by the choice of
orientation. We shall accordingly refer to $R$ itself as the structure matrix
of $J$.

\begin{theoremA}
Let $J$ be a combinatorial regular $\Dcl$-class of a finite semigroup, with structure matrix $R$. Then $J$ has a permutation matching if and only if $R$ admits a balanced weighting.
\end{theoremA}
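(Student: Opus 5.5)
The plan is to reduce everything to the explicit description of inverses in a combinatorial $\Dcl$-class and then use a product weighting. Identify $J$ with $P\times Q$. Since $J$ is combinatorial, each $\Hcl$-class is a singleton, and by the standard Rees-matrix/Clifford--Miller description, $(C,D)$ is an inverse of $(A,B)$ if and only if both $(A,D)$ and $(C,B)$ are idempotent, i.e.\ $(A,D)\in R$ and $(C,B)\in R$. Thus a permutation matching is exactly a bijection $\sigma$ of $P\times Q$ such that $\sigma(A,B)=(C,D)$ always satisfies $(A,D),(C,B)\in R$.

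Throughout, a balanced weighting means a nonzero function $w\ge 0$ supported on $R$ with all row sums $a$ and all column sums $b$. Note first that such a weighting must have $a,b>0$. Indeed, $|P|a=\sum w=|Q|b$, and the total weight $\sum w$ is positive because $w\neq 0$.

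\textbf{Necessity.} Given a matching $\sigma$, write $\sigma(A,B)=(C_{A,B},D_{A,B})$ and define
\[
w(A,D)=\#\{B\in Q:\ D_{A,B}=D\}.
\]
\begin{itemize}
\item The weight $w$ is supported on $R$, because $(A,D_{A,B})\in R$ for every $B$.
\item Fix $A$. Each $B\in Q$ contributes to exactly one $D$, so the row sum is $|Q|$.
\item Fix $D$. The column sum counts the pairs $(A,B)$ with $\sigma(A,B)$ in the $\Lcl$-class indexed by $D$. Since $\sigma$ is a bijection, this equals the size of that $\Lcl$-class, namely $|P|$.
\end{itemize}
Hence $w$ is a balanced weighting with constants $(|Q|,|P|)$.

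\textbf{Sufficiency.} Let $w$ be a balanced weighting with constants $(a,b)$. Define a square matrix $M$ indexed by $J\times J$ by
\[
M\bigl((A,B),(C,D)\bigr)=w(A,D)\,w(C,B),
\]
taking $w$ to be $0$ off $R$. This entry is positive only when $(C,D)$ is an inverse of $(A,B)$.

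For the row indexed by $(A,B)$, summing over $C$ and $D$ factorises:
\[
\sum_{C,D} w(A,D)\,w(C,B)=\Bigl(\sum_D w(A,D)\Bigr)\Bigl(\sum_C w(C,B)\Bigr)=ab.
\]
For the column indexed by $(C,D)$, summing over $A$ and $B$ gives
\[
\sum_{A,B} w(A,D)\,w(C,B)=\Bigl(\sum_A w(A,D)\Bigr)\Bigl(\sum_B w(C,B)\Bigr)=ba.
\]
Since $ab>0$, the matrix $M/(ab)$ is doubly stochastic. By the Birkhoff--von Neumann theorem it is a convex combination of permutation matrices, and any permutation matrix occurring with positive coefficient has its support inside the support of $M$. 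The corresponding permutation $\sigma$ of $J$ therefore sends each element to one of its inverses, which is the required matching.

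The only step needing care is the characterisation of inverses in $P\times Q$ coordinates. It is standard for combinatorial $0$-rectangular bands, and it is the step I expect to require the most attention. Once it is in place, the key observation is the factorisation of the row and column sums of the product weighting, after which both directions are routine.
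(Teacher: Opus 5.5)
Your proposal is correct and follows the paper's proof essentially step for step. It uses the same characterisation of inverses as the crossed-square relation $\Gamma(R)$ (the paper cites Howie, Theorem 2.3.4), the same counting weighting $w(A,D)=\#\{B : D_{A,B}=D\}$ with constants $(|Q|,|P|)$ for necessity (Proposition~\ref{prop:converse}), and the same normalised product matrix $w(A,D)\,w(C,B)/(ab)$ with Birkhoff's theorem for sufficiency (Proposition~\ref{prop:crossed} and Lemma~\ref{lem:support}).
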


A balanced weighting is a non-negative weighting of the cells of $R$ whose row sums are all equal and whose column sums are all equal; the $0$-$1$ case is a bi-regular bipartite graph (with the rows and columns representing the independent sets), and the constant case on a regular graph is the hypothesis of \cite[Lemma 2.9]{H14}. This new hypothesis is not automatic: Example~\ref{ex:fail} exhibits a structure matrix, with no empty row or column, whose $\Dcl$-class has no permutation matching.

Application of Theorem A answers the question that motivated this research. Whether $\On$, the (regular) monoid of all order-preserving maps on the chain $[n] = \{1 < 2 < \cdots < n\}$, has a permutation matching is Question~1 of \cite{H24}.

\begin{theoremB}
For every $n \ge 1$, the monoid $\On$ has a permutation matching.
\end{theoremB}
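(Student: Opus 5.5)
The plan is to apply Theorem~A one $\Dcl$-class at a time. $\On$ is a finite regular monoid and is $\Hcl$-trivial (aperiodic), so every $\Dcl$-class is combinatorial and regular. A permutation of $\On$ sending each $x$ to an inverse must preserve each $\Dcl$-class, since all inverses of $x$ lie in $D_x$. Hence a permutation matching of $\On$ is exactly a union of permutation matchings of its $\Dcl$-classes $D_1,\dots,D_n$, where $D_k$ consists of the maps of rank $k$. By Theorem~A it therefore suffices to produce, for each $k$, a balanced weighting of the structure matrix $R_k$ of $D_k$.

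\textbf{Step 1: describe $R_k$.} The $\Rcl$-classes of $D_k$ are indexed by kernels, that is, partitions of $[n]$ into $k$ convex blocks, equivalently compositions $(c_1,\dots,c_k)$ of $n$. The $\Lcl$-classes are indexed by images, the $k$-subsets $B=\{b_1<\dots<b_k\}$. The cell $(A,B)$ is idempotent iff $B$ is a transversal of $A$, which forces $b_i$ to lie in the $i$-th block. Row $A$ then has $\prod_i c_i$ idempotents. Column $B$ has $\prod_{i<k}(b_{i+1}-b_i)$ idempotents, one for each choice of cut point between consecutive $b_i$. Neither count is constant, which is why a genuine weighting is needed.

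\textbf{Step 2: recode the idempotents.} Record each idempotent $(A,B)$ by the numbers $l_i,r_i\ge 0$ of points of block $i$ lying left and right of $b_i$. This identifies the idempotents of $D_k$ with vectors $x=(l_1,r_1,l_2,r_2,\dots,l_k,r_k)\in\mathbb{N}^{2k}$ with coordinate sum $n-k$. Fixing the row $A$ means fixing the pair sums $l_i+r_i=c_i-1$. Fixing the column $B$ means fixing the end values $l_1=b_1-1$ and $r_k=n-b_k$ together with the inner pair sums $r_i+l_{i+1}=b_{i+1}-b_i-1$. So the task is a purely combinatorial one: find $w\colon\{x\in\mathbb{N}^{2k}:\sum x_j=n-k\}\to[0,\infty)$ whose sum over each fibre of the ``odd pairing'' is a constant $a$, and whose sum over each fibre of the ``even pairing with free ends'' is a constant $b$.

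\textbf{Step 3 (main obstacle): construct $w$.} Product weights $w(x)=\prod_j u(x_j)$ almost work. For example, $u(s)=\binom{2s}{s}4^{-s}$ satisfies $u*u\equiv 1$, which makes all row sums equal to $1$. But the two end coordinates then contribute the non-constant factor $u(l_1)u(r_k)$ to the column sums. The abstract's hint points to a multinomial-type weight instead. I would try weights of the form
\[
w(x)=\frac{(n-k)!}{\prod_j x_j!}\cdot\theta(x),
\]
that is, count arrangements of a multiset of $n-k$ tokens coloured by $2k$ colours. Here $\theta$ is chosen so that merging two adjacent colours, or discarding an end colour (the ``deletion of one entry''), yields a quantity depending only on the merged data. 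The sums can then be compared by Vandermonde/multinomial convolution identities in generating-function form: the rows correspond to $\prod_i(z_{2i-1}+z_{2i})^{\cdot}$, the columns to $z_1\,z_{2k}\prod(z_{2i}+z_{2i+1})^{\cdot}$. I would first test small cases $k=1,2$ by hand, where $k=1$ is trivial since $R_1$ is a single row. This should guess the right $\theta$, after which I would verify both constancy conditions by a telescoping convolution argument.

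If no closed form appears, my fallback is the equivalent Birkhoff--von Neumann/flow formulation. A balanced weighting exists iff the uniform supplies $|Q|$ on rows and demands $|P|$ on columns are feasible through the support $R_k$, i.e. $|X|/|P|\le|N(X)|/|Q|$ for every set $X$ of rows. One would then verify this condition directly, using the left-to-right shifting symmetry of transversals to reduce to ``interval'' sets $X$.
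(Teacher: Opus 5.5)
Your reduction to the individual classes $D_k$ via Theorem~A is correct, and so are your description of $R_k$ and the slack coordinates $x=(l_1,r_1,\dots,l_k,r_k)$. There is a genuine gap in Step~3, which is the whole content of Theorem~B once Theorem~A is available: you never produce a balanced weighting. The ansatz $w(x)=\frac{(n-k)!}{\prod_j x_j!}\,\theta(x)$ says nothing until $\theta$ is specified, since every non-negative function can be written this way, and no $\theta$ is given or verified. Your fallback of checking $|Q|\,|X|\le|P|\,|R(X)|$ for every set $X$ of rows relies on a reduction to ``interval'' sets $X$ that you do not justify or carry out. As written, the proposal proves nothing beyond Theorem~A.

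The missing idea is a change of coordinates that makes a suitable weighting almost obvious. Let $a_i$ be the right end of block $i$, and put $s_i=a_i-i$ and $t_i=b_i-i$. Then $s$ is a multiset of size $k-1$ and $t$ a multiset of size $k$ over $\{0,\dots,n-k\}$, and both encodings are bijections. Your vector $x$ is exactly the sequence of consecutive gaps in the chain
$0\le t_1\le s_1\le t_2\le\cdots\le s_{k-1}\le t_k\le n-k$, since $l_1=t_1$, $r_i=s_i-t_i$, $l_{i+1}=t_{i+1}-s_i$ and $r_k=(n-k)-t_k$. Thus the transversal relation is precisely interlacing. Now weight the pair $(s,t)$ by the number of positions of $t$ whose deletion leaves $s$. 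The weight is supported on $R_k$, because deleting one entry of $t$ always leaves a multiset interlacing with $t$. Each column sum is $k$, because deleting any of the $k$ positions of $t$ leaves some $s$. Each row sum is $\sum_{v=0}^{n-k}\bigl(|s|_v+1\bigr)=(k-1)+(n-k+1)=n$: the only $t$ reachable from $s$ are the multisets $s\uplus\{v\}$, and $s\uplus\{v\}$ has $|s|_v+1$ deletable copies of $v$. In your coordinates this weight is supported only on those $x$ with $r_1=\cdots=r_{i-1}=0=l_{i+1}=\cdots=l_k$ for some $i$, and its value counts the number of such $i$. That is very far from the full-support multinomial shape you were searching among, so your proposed search was unlikely to find it.
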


The argument of \cite{H14} for the full transformation semigroup applied Hall's lemma through exploiting the regularity of numbers of inverses across strips of a $\Dcl$-class consisting of $\Rcl$-classes with the same numerical partition.  However this uniformity breaks down when we restrict to isotone mappings. The model of argument used to show that $\mathcal{OP}_n$, the monoid of all orientation-preserving mappings, has an involution matching makes use of the natural duality between $\Lcl$- and $\Rcl$-classes to establish a canonical matching \cite[Example 2.6]{H14}. However the $\Dcl$-classes of $\On$ are not square but rather the ratio of $\Lcl$- to $\Rcl$-classes in $D_r$ (the $\mathcal {D}$-class of all mappings of rank $r$), is $n:r$. ($\On$ is a subsemigroup of $\mathcal{OP}_n$, but is not closed under the canonical inverse map of that example.)  What we show in \S\ref{sec:del} is that regularity is restored if the incidences are counted with a natural multiplicity: encoding kernels and images as multisets, the transversal relation becomes interlacing, and weighting a pair by the number of entries of the larger multiset whose deletion produces the smaller one gives a balanced weighting. Theorem~B then follows from Theorem~A.

\section{Crossed squares and balanced weightings}\label{sec:general}

Throughout this section $P$ and $Q$ are finite non-empty sets and $R \subseteq P \times Q$. For $A \in P$ and $B \in Q$ put
\[
N(A) = \{\, D \in Q : (A,D) \in R \,\}, \qquad M(B) = \{\, C \in P : (C,B) \in R \,\},
\]
and for $X \subseteq P$ write $R(X) = \bigcup_{A \in X} N(A)$. We regard $R$ as a $0$-$1$ matrix with rows indexed by $P$ and columns by $Q$,
so that the row of $R$ at $A$ encodes $N(A)$ and its column at $B$ encodes $M(B)$; in
particular $R$ has no empty row or column precisely when every $N(A)$ and every
$M(B)$ is non-empty. In the setting of \S1, $N(A)$ is non-empty exactly when the
$\mathcal{R}$-class $A$ contains an idempotent, and $M(B)$ exactly when the
$\mathcal{L}$-class $B$ does.

We say that a permutation $\sigma$ of a finite set $V$ is \emph{contained in} a relation $N \subseteq V \times V$ if $(v,\sigma(v)) \in N$ for every $v \in V$. The purpose of the next definition is to capture the relationship between pairs of inverses and pairs of idempotents in a semigroup $\mathcal D$-class. 

\begin{definition}\label{def:crossed}
The \emph{crossed square} of $R$ is the relation $\Gamma(R)$ on $P \times Q$ given by
\[
\Gamma(R) \;=\; \bigl\{\, \bigl((A,B),(C,D)\bigr) \;:\; (A,D) \in R \ \text{ and } \ (C,B) \in R \,\bigr\} .
\]
\end{definition}

Interchanging the two arguments leaves the defining condition unaltered, so $\Gamma(R)$ is symmetric. The name records that the two coordinates are constrained separately, but each against the other element's coordinate. Equivalently, the elements of $P \times Q$ related to $(A,B)$ in $\Gamma(R)$ are
precisely those of the product $M(B) \times N(A)$, the roles of $A$ and $B$
being exchanged.

\begin{definition}\label{def:weighting}
A \emph{balanced weighting} of $R$ is a function $w : P \times Q \to [0,\infty)$ vanishing off $R$ for which there are constants $a,b>0$ with
\[
\sum_{D \in Q} w(A,D) = a \quad (A \in P), \qquad \sum_{A \in P} w(A,D) = b \quad (D \in Q).
\]
\end{definition}

Summing all weights in the two possible orders gives $a|P| = b|Q|$. If $w$ takes only the values $0$ and $1$ then $R$, read as a bipartite graph, is bi-regular of degrees $(a,b)$; if in addition $a = b = m$ it is regular of degree $m$, which is the hypothesis of \cite[Lemma 2.9]{H14}.

A square matrix is \emph{doubly stochastic} if its entries are non-negative and every row sum and every column sum equals $1$, and a \emph{permutation matrix} is a doubly stochastic matrix with entries $0$ and $1$. We use the classical theorem of Birkhoff in the following form.

\begin{theorem}[Birkhoff \cite{B46}]\label{thm:birkhoff}
Every doubly stochastic matrix is a convex linear combination of permutation matrices: if $M$ is doubly stochastic then $M = \sum_{k} \lambda_k P_k$ for some permutation matrices $P_k$ and reals $\lambda_k > 0$ with $\sum_k \lambda_k = 1$.
\end{theorem}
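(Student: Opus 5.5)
We prove Theorem~\ref{thm:birkhoff} by induction on the number of non-zero entries of $M$. The key ingredient is Hall's lemma, applied to the bipartite graph given by the support of $M$. So the general theorem will ultimately rest on the same combinatorial fact that the regularity argument of \cite{H14} used, now in weighted form.

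Let $M$ be an $n\times n$ doubly stochastic matrix, and let $G$ be the bipartite graph with rows and columns as the two sides, where row $i$ is joined to column $j$ when $M_{ij}>0$. The first step, which I expect to be the only real point, is to verify Hall's condition for $G$. Take a set $X$ of $k$ rows and let $Y$ be the set of columns adjacent to some row of $X$. The total mass in the rows of $X$ is $k$, since each row sums to $1$. All of this mass lies in the columns of $Y$, and those columns carry total mass exactly $|Y|$. Hence
\[
k=\sum_{i\in X}\sum_{j\in Y} M_{ij}\;\le\;\sum_{j\in Y}\sum_{i} M_{ij}=|Y|.
\]
Hall's lemma therefore yields a perfect matching in $G$, that is, a permutation $\pi$ with $M_{i\pi(i)}>0$ for all $i$. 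Let $P$ be the corresponding permutation matrix.

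Next put $\lambda=\min_i M_{i\pi(i)}$, so that $0<\lambda\le 1$. If $\lambda=1$, then every row of $M$ already has its full mass on the entries $M_{i\pi(i)}$, so $M=P$ and we are done. Otherwise set
\[
M'=\frac{M-\lambda P}{1-\lambda}.
\]
This matrix is non-negative by the choice of $\lambda$. Each of its row and column sums is $(1-\lambda)/(1-\lambda)=1$, so $M'$ is doubly stochastic. Moreover $M'$ has strictly fewer non-zero entries than $M$: the entry at a position where the minimum $\lambda$ is attained becomes $0$, and no new non-zero entries appear.

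For the induction, note that every doubly stochastic matrix has at least $n$ non-zero entries, one in each row. So the number of non-zero entries is a positive integer that strictly decreases at each step, and the induction is well founded; the base case is the situation $\lambda=1$ above. By the induction hypothesis, $M'=\sum_k\mu_k P_k$ is a convex combination of permutation matrices. Then
\[
M=\lambda P+\sum_k(1-\lambda)\mu_k P_k,
\]
whose coefficients are positive and sum to $\lambda+(1-\lambda)=1$. This gives the required convex combination.
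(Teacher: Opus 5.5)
Your proof is correct. It is the standard Hall-based argument for Birkhoff's theorem: verify Hall's condition on the support by a mass count, subtract $\lambda P$ with $\lambda=\min_i M_{i\pi(i)}$, renormalise, and induct on the size of the support. The paper gives no proof at all; it imports Theorem~\ref{thm:birkhoff} from Birkhoff's paper as a black box, so the comparison is between citing and proving.

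Your route is worth having here for a specific reason. The only consequence of Theorem~\ref{thm:birkhoff} that the paper ever uses is Lemma~\ref{lem:support}: the support of a doubly stochastic matrix contains a permutation. That is exactly your first paragraph, the inequality $k\le|Y|$ followed by Hall's lemma. The peeling step and the induction are not needed for the paper's purposes.

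So the introduction's claim that Hall's lemma is ``taken up by'' Birkhoff's theorem can be sharpened. Proposition~\ref{prop:crossed} really rests on Hall's lemma applied to the support of the matrix $w(A,D)\,w(C,B)/(ab)$, with the degree-regularity hypothesis used earlier replaced by its weighted form, just as you anticipate. Citing keeps the exposition short and leans on a classical result. Your version makes the section self-contained modulo Hall's lemma and shows that the full convex decomposition is unnecessary.

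A minor point of presentation: strong induction needs no separate base case, and your observation that the support never has fewer than $n$ entries already ensures termination. If you want an explicit base, note that a doubly stochastic matrix with exactly $n$ non-zero entries is a permutation matrix, which is your case $\lambda=1$.
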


\begin{lemma}\label{lem:support}
Let $V$ be a finite set and $N \subseteq V \times V$. If there is a doubly stochastic matrix $M$, with rows and columns indexed by $V$, whose entry $M(u,v)$ is non-zero only when $(u,v) \in N$, then some permutation of $V$ is contained in $N$.
\end{lemma}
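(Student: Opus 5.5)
The plan is to apply Theorem~\ref{thm:birkhoff} directly to the given matrix $M$ and then read a permutation off one of the terms of the decomposition. Since $M$ is doubly stochastic, Birkhoff's theorem gives
\[
M = \sum_k \lambda_k P_k, \qquad \lambda_k>0,\quad \sum_k \lambda_k = 1,
\]
with each $P_k$ a permutation matrix indexed by $V$. Because $\sum_k \lambda_k = 1$, the sum is non-empty, so we may fix one index $k$ and write $P = P_k$.

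The key step is a support argument that uses non-negativity. I claim that whenever $P(u,v)=1$ we have $M(u,v)\neq 0$. Every term $\lambda_j P_j(u,v)$ is non-negative, so
\[
M(u,v) \;\ge\; \lambda_k P_k(u,v) \;=\; \lambda_k \;>\; 0 .
\]
The hypothesis says that $M(u,v)\neq 0$ forces $(u,v)\in N$. Hence every cell in the support of $P$ lies in $N$.

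It remains to convert $P$ into a permutation of $V$. Since $P$ is a $0$-$1$ doubly stochastic matrix, each row $u$ contains exactly one $1$; let $\sigma(u)$ be its column. Each column also contains exactly one $1$, so $\sigma$ is a bijection $V\to V$. By the previous paragraph, $(u,\sigma(u))\in N$ for all $u$, so $\sigma$ is contained in $N$.

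I do not expect a genuine obstacle here; the one point needing care is that positivity of the $\lambda_k$ together with non-negativity of all terms prevents cancellation, which is exactly what transfers the support of $P_k$ into the support of $M$. An alternative route would avoid Birkhoff's theorem: one checks Hall's condition on the support of $M$, since for $X\subseteq V$ the rows in $X$ carry total mass $|X|$, and all of that mass lies in columns of $N(X)$, whose total mass is $|N(X)|$. Hall's lemma then gives a perfect matching. Since the paper has set up Theorem~\ref{thm:birkhoff}, I would use the Birkhoff argument.
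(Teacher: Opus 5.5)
Your proof is correct and follows the paper's argument: decompose $M$ by Birkhoff's theorem, fix one permutation matrix $P_k$, and use non-negativity of all terms to get $M(u,\sigma(u)) \ge \lambda_k > 0$, hence $(u,\sigma(u)) \in N$. Your extra details (non-emptiness of the sum, reading $\sigma$ off $P_k$) and the Hall-condition alternative are sound but go beyond what the paper writes.
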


\begin{proof}
Write $M = \sum_k \lambda_k P_k$ as in Theorem~\ref{thm:birkhoff}, fix any $k$, and let $\sigma$ be the permutation with matrix $P_k$. For each $u$ the entry of $P_k$ at $(u,\sigma(u))$ is $1$, and all terms of the sum are non-negative, so $M(u,\sigma(u)) \ge \lambda_k > 0$. Hence $(u,\sigma(u)) \in N$ for every $u$. 
\end{proof}

\begin{proposition}\label{prop:crossed}
If $R$ admits a balanced weighting, then some permutation of $P \times Q$ is contained in $\Gamma(R)$.
\end{proposition}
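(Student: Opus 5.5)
We will build a doubly stochastic matrix on $V = P\times Q$ supported in $\Gamma(R)$ and then apply Lemma~\ref{lem:support}. Let $w$ be a balanced weighting with constants $a,b$, so that $a|P| = b|Q|$. The natural candidate is a product of the two weights that the definition of $\Gamma(R)$ calls for. Put
\[
M\bigl((A,B),(C,D)\bigr) \;=\; \frac{w(A,D)\,w(C,B)}{ab}.
\]
This entry is non-zero only if $w(A,D)>0$ and $w(C,B)>0$. Since $w$ vanishes off $R$, that forces $(A,D)\in R$ and $(C,B)\in R$, so the support of $M$ lies in $\Gamma(R)$. The entries are non-negative, and $a,b>0$ by Definition~\ref{def:weighting}.

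Next we check the row sums. Fix $(A,B)$ and sum over $(C,D)\in P\times Q$. The sum factorises as
\[
\frac{1}{ab}\Bigl(\sum_{D\in Q} w(A,D)\Bigr)\Bigl(\sum_{C\in P} w(C,B)\Bigr) \;=\; \frac{a\cdot b}{ab} \;=\; 1 .
\]
Here the first factor is a row sum of $w$ and the second a column sum. Column sums are handled symmetrically. Fix $(C,D)$ and sum over $(A,B)$: the sum is $\bigl(\sum_{A} w(A,D)\bigr)\bigl(\sum_{B} w(C,B)\bigr)/ab = b\cdot a/ab = 1$. Thus $M$ is doubly stochastic, and Lemma~\ref{lem:support} gives a permutation of $P\times Q$ contained in $\Gamma(R)$.

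The only real obstacle is choosing the normalisation so that row and column sums come out equal simultaneously. The crossed structure settles this. Each entry pairs one row-indexed weight $w(A,\cdot)$ with one column-indexed weight $w(\cdot,B)$. Every full sum therefore produces exactly one factor $a$ and one factor $b$, whichever index is held fixed. Consequently, dividing by $ab$ works uniformly, and the relation $a|P| = b|Q|$ is not even needed.
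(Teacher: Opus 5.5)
Your proof is correct and essentially identical to the paper's. It uses the same product matrix $w(A,D)\,w(C,B)/(ab)$, the same factorisation of the row and column sums, the same support argument placing $M$ inside $\Gamma(R)$, and the same final appeal to Lemma~\ref{lem:support}; your remark that $a|P| = b|Q|$ is not needed is also accurate.
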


\begin{proof}
Let $w$ be a balanced weighting with constants $a,b$ and set, for $u = (A,B)$ and $v = (C,D)$,
\[
M(u,v): \;=\; \frac{w(A,D)\,w(C,B)}{ab} \;\ge\; 0 .
\]
The sum over the row indexed by $u$ factorises because the summation indices $C$ and $D$ occur in separate factors, and so:
\[
\sum_{(C,D)} M(u,v) \;=\; \frac{1}{ab}\Bigl(\sum_{D} w(A,D)\Bigr)\Bigl(\sum_{C} w(C,B)\Bigr) \;=\; \frac{a\,b}{ab} \;=\; 1 ,
\]
by the row condition of Definition~\ref{def:weighting} for the sum over $D$ and the column condition for the sum over $C$. Dually each column sum is $\tfrac{1}{ab}\cdot b\cdot a = 1$, so $M$ is doubly stochastic. If $M(u,v) > 0$ then $w(A,D)>0$ and $w(C,B)>0$, so $(A,D) \in R$ and $(C,B) \in R$, that is $(u,v) \in \Gamma(R)$. Applying Lemma~\ref{lem:support} now gives the result. 
\end{proof}

\begin{proposition}\label{prop:converse}
If some permutation of $P \times Q$ is contained in $\Gamma(R)$, then $R$ admits a balanced weighting, with constants $a = |Q|$ and $b = |P|$.
\end{proposition}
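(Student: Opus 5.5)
Given a permutation $\sigma$ of $P\times Q$ contained in $\Gamma(R)$, the plan is to count incidences. For each $(A,B)\in P\times Q$ write $\sigma(A,B)=(C,D)$. Containment in $\Gamma(R)$ gives $(A,D)\in R$ and $(C,B)\in R$. Each element $u=(A,B)$ therefore produces two cells of $R$:
\begin{itemize}
\item the cell $(A,D)$, which shares the first coordinate of $u$ and the second coordinate of $\sigma(u)$;
\item the cell $(C,B)$, which shares the first coordinate of $\sigma(u)$ and the second coordinate of $u$.
\end{itemize}
Since $\sigma$ is symmetric in nothing in particular, it is cleanest to use only the first kind of cell. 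Define
\[
w(X,Y) \;=\; \bigl|\{\, u\in P\times Q : u \text{ has first coordinate } X,\ \sigma(u) \text{ has second coordinate } Y \,\}\bigr| .
\]
Then $w$ is non-negative, and it vanishes off $R$: if $w(X,Y)>0$, there is some $u$ witnessing it, and the first condition of the crossed square gives $(X,Y)\in R$.

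Next I would compute the row and column sums.

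For a fixed row $X\in P$, summing $w(X,Y)$ over all $Y\in Q$ counts every $u$ with first coordinate $X$ exactly once. There are $|Q|$ such $u$, so every row sum equals $|Q|$.

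For a fixed column $Y\in Q$, summing $w(X,Y)$ over all $X\in P$ counts every $u$ such that $\sigma(u)$ has second coordinate $Y$. Because $\sigma$ is a bijection, these $u$ correspond one-to-one with the elements $v\in P\times Q$ having second coordinate $Y$. There are $|P|$ such $v$, so every column sum equals $|P|$.

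Both constants are positive because $P$ and $Q$ are non-empty. Hence $w$ is a balanced weighting with $a=|Q|$ and $b=|P|$, as claimed. This matches the consistency relation $a|P|=b|Q|$, since both sides equal $|P||Q|$.

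There is no real obstacle. The only point needing care is the column count, which is where bijectivity of $\sigma$ is used: the preimage under $\sigma$ of a column-fibre of $P\times Q$ has the same size as that fibre. The second condition $(C,B)\in R$ is not needed here.
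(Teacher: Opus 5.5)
Your proof is correct and is essentially the paper's own argument. Your $w(X,Y)$ is exactly the paper's $w(A,D)=\bigl|\{B\in Q : D(A,B)=D\}\bigr|$: the row sums equal $|Q|$ automatically, and the column sums equal $|P|$ via $\bigl|\sigma^{-1}(P\times\{D\})\bigr| = |P\times\{D\}|$, which is the only place bijectivity is used, just as the paper notes.
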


\begin{proof}
Let $\sigma$ be such a permutation and write $\sigma(A,B) = \bigl(C(A,B),\, D(A,B)\bigr)$. Define
\[
w(A,D) \;=\; \bigl|\{\, B \in Q \;:\; D(A,B) = D \,\}\bigr| .
\]
If $w(A,D) > 0$ then $D = D(A,B)$ for some $B$, and $\bigl((A,B),\sigma(A,B)\bigr) \in \Gamma(R)$ gives $(A,D) \in R$; so $w$ vanishes off $R$.

Fix $A \in P$. Every $B \in Q$ satisfies $D(A,B) = D$ for exactly one $D \in Q$, so the $|Q|$ elements $B$ are distributed among the values $w(A,D)$, whence $\sum_{D} w(A,D) = |Q|$.

Fix $D \in Q$. Then
\[
\sum_{A \in P} w(A,D) \;=\; \bigl|\{\, u \in P\times Q : D(u) = D \,\}\bigr| \;=\; \bigl|\sigma^{-1}\bigl(P \times \{D\}\bigr)\bigr| \;=\; \bigl|P \times \{D\}\bigr| \;=\; |P| ,
\]
the third equality because $\sigma$ is a bijection. Therefore $w$ is a balanced weighting with $a = |Q|$ and $b = |P|$. 
\end{proof}

Note that only the column condition uses that $\sigma$ is a bijection; the row condition
holds for any function whose graph lies in $\Gamma(R)$. The asymmetry is not an
artefact of the construction: in Example~2.9 such a function exists (every element of $P \times Q$ has a neighbour in $\Gamma(R)$, since $R$ has no empty row or column), but no
balanced weighting does.

\section*{Proof of Theorem A}

\begin{proof}

$\Rightarrow)$ For $x=(A,B) \in J$, $y=(C,D)$ is an inverse of $x$ if and only if $\bigl((A,B),(C,D)\bigr) \in \Gamma(R)$, \cite[Theorem 2.3.4]{Howie}. Hence the given permutation matching of $J$ is contained in  $\Gamma(R)$, and so by Proposition ~\ref{prop:converse}, $R$ admits a balanced weighting with constants $a = |Q|$ and $b = |P|$. 

$\Leftarrow)$ Conversely, given a balanced weighting of $R$, by Proposition \ref{prop:crossed}, $\Gamma (R)$ contains a  permutation, which is the required matching for $J$. 
\end{proof}

\begin{corollary}\label{cor:assemble}
A finite regular semigroup $S$ has a permutation matching if and only if the
structure matrix of each $0$-rectangular band $(D_x \cup \{0\})/\mathcal{H}$
admits a balanced weighting.
\end{corollary}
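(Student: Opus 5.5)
The plan is to reduce Corollary~\ref{cor:assemble} to Theorem~A one $\Dcl$-class at a time. I will use three facts. First, in a regular semigroup every inverse of $x$ lies in $D_x$. Second, a permutation matching of $S$ therefore permutes each $\Dcl$-class. Third, the $\Dcl$-classes of $S$ partition $S$.

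For the forward direction, let $\sigma$ be a permutation matching of $S$. Fix a $\Dcl$-class $D$. Then $\sigma(D)\subseteq D$, and since $D$ is finite and $\sigma$ is injective, $\sigma|_D$ is a permutation of $D$ sending each element to an inverse, that is, a matching of $D$. Conversely, if every $\Dcl$-class has a matching $\sigma_D$, the union $\sigma=\bigcup_D \sigma_D$ is a permutation of $S$, because the classes partition $S$. It is a matching because inverses are computed in $S$ itself. So $S$ has a matching if and only if every $\Dcl$-class does.

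The remaining step is to pass from a possibly non-combinatorial $D_x$ to the combinatorial quotient $(D_x\cup\{0\})/\Hcl$, so that Theorem~A applies. I would invoke \cite[Theorem 1.6]{H14}, which the introduction cites as reducing the matching question to these $0$-rectangular bands. If a direct argument is wanted, I would proceed as follows.

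\emph{Matching upstairs gives matching downstairs.} Suppose $y$ is an inverse of $x$ in $D=D_x$. Then $H_y=R_y\cap L_y$ is a group $\Hcl$-class exactly when $R_x\cap L_y$ and $R_y\cap L_x$ contain idempotents (Howie, Theorem 2.3.4 again, in its general form). So whether $y$ is an inverse of $x$ depends only on the $\Hcl$-classes, up to the existence of an inverse inside each suitable $\Hcl$-class. Hence the inverse relation on $D$ is the pull-back of the crossed-square relation $\Gamma(R)$ on $P\times Q$, where each $\Hcl$-class is a block of common size $h=|H_x|$. A matching $\sigma$ of $D$ then gives a matrix on $P\times Q$: the entry at $(H,H')$ is $1/h$ times the number of $z\in H$ with $\sigma(z)\in H'$. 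This matrix is doubly stochastic and supported in $\Gamma(R)$, so Lemma~\ref{lem:support} yields a permutation of $P\times Q$ contained in $\Gamma(R)$.

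\emph{Matching downstairs gives matching upstairs.} Conversely, a permutation $\tau$ contained in $\Gamma(R)$ lifts to $D$ by choosing bijections $H\to\tau(H)$, since all $\Hcl$-classes in $D$ have size $h$. Each element must land on an actual inverse, and this is the one point requiring care. If $H$ and $H'$ are $\Gamma$-related, every $z\in H$ has exactly one inverse in $H'$: inverses of $z$ in $R_{z'}\cap L_{z''}$ are unique. The map $z\mapsto$ its inverse in $H'$ is a bijection $H\to H'$, its inverse map being the same construction from $H'$ back to $H$. So I lift $\tau$ by using these canonical bijections.

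Finally, I apply Theorem~A to each combinatorial class $(D_x\cup\{0\})/\Hcl$, with $R$ its structure matrix as defined in \S1. Orientation is irrelevant by the remark there. Combining the three equivalences gives the corollary. I expect the lifting step to be the only real obstacle. It is settled by the uniqueness of inverses within a prescribed $\Hcl$-class, and otherwise by citing \cite[Theorem 1.6]{H14}.
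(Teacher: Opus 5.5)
Your proposal is correct and follows the paper's proof, which likewise invokes \cite[Theorem 1.6]{H14} to reduce to the $0$-rectangular bands $(D_x\cup\{0\})/\Hcl$ and then applies Theorem~A to each $\Dcl$-class. Your optional direct proof of that reduction is also sound, with two wording corrections. First, Howie's criterion characterises when $H_y$ \emph{contains an inverse of} $x$, not when $H_y$ is a group. Second, the inverse relation on $D_x$ is only contained in the pull-back of $\Gamma(R)$, not equal to it, but containment is all your first direction uses, and your second direction correctly relies on uniqueness of inverses within an $\Hcl$-class.
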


\begin{proof}
By \cite[Theorem 1.6]{H14}, $S$ has a permutation matching if and only if each $0$-rectangular band $(D_x \cup \{0\})/\mathcal{H}$ has a permutation matching, whence the result follows from Theorem A. 
\end{proof}

It is convenient to have a numerical criterion for the existence of a balanced
weighting. We use the max-flow min-cut theorem \cite{FF} in the following form. A \emph{network} consists of a finite directed graph with two distinguished vertices, a \emph{source} $\sigma$ and a \emph{sink} $\tau$, together with a capacity $c(\varepsilon) \in [0,\infty]$
for each arc $\varepsilon$. A \emph{flow} is an assignment $f(\varepsilon) \in [0,
c(\varepsilon)]$ to the arcs such that at every vertex other than $\sigma$ and $\tau$ the
total entering equals the total leaving; its \emph{value} is the net amount leaving
$\sigma$. A \emph{cut} is a set $S$ of vertices with $\sigma \in S$ and $\tau \notin S$, and its \emph{capacity} is the sum of $c(\varepsilon)$ over the arcs $\varepsilon$ running from a vertex of $S$ to a vertex outside $S$. The theorem asserts that, provided some cut has finite capacity, the maximum value of a flow equals the minimum capacity of a cut.

\begin{theorem}\label{thm:criterion}
Let $P$ and $Q$ be finite non-empty sets and $R \subseteq P \times Q$. Then $R$ admits a
balanced weighting if and only if
\begin{equation}\label{eq:crit}
|Q|\,|X| \;\le\; |P|\,|R(X)| \qquad \text{for every } X \subseteq P .
\end{equation}
\end{theorem}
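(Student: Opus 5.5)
Necessity is a direct counting argument, and sufficiency comes from max-flow min-cut applied to a transportation network.

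For necessity, suppose $w$ is a balanced weighting with constants $a,b$, so that $a|P|=b|Q|$. Fix $X\subseteq P$. Every cell $(A,D)$ with $A\in X$ and $w(A,D)>0$ has $D\in R(X)$. Hence
\[
a|X|=\sum_{A\in X}\sum_{D} w(A,D)\le \sum_{D\in R(X)}\sum_{A\in P} w(A,D)=b|R(X)|.
\]
Multiplying by $|P|$ and using $a|P|=b|Q|$ gives $b|Q||X|\le b|P||R(X)|$. Since $b>0$, this is \eqref{eq:crit}.

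For sufficiency, build the following network. Its vertices are a source $\sigma$, a sink $\tau$, one vertex for each $A\in P$ and one for each $D\in Q$. There is an arc $\sigma\to A$ of capacity $|Q|$ for each $A$, an arc $D\to\tau$ of capacity $|P|$ for each $D$, and an arc $A\to D$ of capacity $\infty$ for each $(A,D)\in R$. The cut $\{\sigma\}$ has finite capacity $|P||Q|$, so the max-flow min-cut theorem applies.

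Any flow $f$ of value $|P||Q|$ must saturate every source arc and, since the total sink capacity is also $|P||Q|$, every sink arc. Setting $w(A,D)=f(A\to D)$ on $R$ and $0$ elsewhere then gives row sums $|Q|$ and column sums $|P|$. This is a balanced weighting with $a=|Q|$ and $b=|P|$, both positive because $P$ and $Q$ are non-empty.

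It therefore remains to show that every cut has capacity at least $|P||Q|$; this is the main step. Let $S$ be a cut, and write $X=S\cap P$ and $Y=S\cap Q$. If some $A\in X$ has a neighbour $D\in N(A)$ outside $Y$, the arc $A\to D$ leaves $S$ and the capacity is infinite. So we may assume $R(X)\subseteq Y$. The finite capacity is then
\[
|Q|\,|P\setminus X|+|P|\,|Y|\;\ge\;|Q|\,|P\setminus X|+|P|\,|R(X)|.
\]
The first term counts the source arcs to $P\setminus X$, and the second counts the sink arcs from $Y$. By \eqref{eq:crit}, the right-hand side is at least $|Q||P|-|Q||X|+|Q||X|=|P||Q|$. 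So the minimum cut capacity is $|P||Q|$, the maximum flow attains this value, and the weighting above exists.

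The only delicate point is the bookkeeping of which arcs cross a cut. Infinite capacities on the middle arcs are what force $R(X)\subseteq Y$, and that inclusion is exactly what turns the cut bound into \eqref{eq:crit}.
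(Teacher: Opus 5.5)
Your proof is correct and follows the paper's argument essentially step for step: the same source--sink network with capacities $|Q|$, $\infty$ and $|P|$, the same observation that a flow of value $|P||Q|$ saturates every outer arc, and the same cut analysis in which infinite middle capacities force $R(X)\subseteq Y$. The only cosmetic difference is that you prove necessity with the original constants $a,b$ and the identity $a|P|=b|Q|$, whereas the paper first normalises the weighting to $a=|Q|$ and $b=|P|$.
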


\begin{proof}
Write $p = |P|$ and $q = |Q|$. We may normalise the constants of Definition 2.2: if $w$ is a balanced weighting with constants $a$ and $b$ then, since $ap = bq$, the weighting
$(q/a)\,w$ has row sums $q$ and column sums $p$. And so throughout we take
\begin{equation}\label{eq:norm}
\sum_{D \in Q} w(A,D) = q \quad (A \in P), \qquad
\sum_{A \in P} w(A,D) = p \quad (D \in Q),
\end{equation}
and note that the total weight is then $pq$, counted either way.

\smallskip
\noindent\emph{Necessity.} Let $w$ satisfy \eqref{eq:norm} and let $X \subseteq P$. Summing
the row conditions over $X$ gives $\sum_{A \in X}\sum_{D \in Q} w(A,D) = |X|\,q$. Every
non-zero term of that double sum has $(A,D) \in R$ with $A \in X$, hence $D \in R(X)$; so
the sum is at most $\sum_{D \in R(X)}\sum_{A \in P} w(A,D) = p\,|R(X)|$, which is
\eqref{eq:crit}.

\smallskip
\noindent\emph{Sufficiency.} Assume \eqref{eq:crit} and build a network $\mathcal{N}$ as
follows.
\begin{itemize}
\item Vertices: a source $\sigma$, a sink $\tau$, a vertex $u_A$ for each $A \in P$, and a vertex $v_D$ for each $D \in Q$.
\item Arcs $\sigma \to u_A$ of capacity $q$, one for each $A \in P$.
\item Arcs $u_A \to v_D$ of capacity $\infty$, one for each pair $(A,D) \in R$ and no others.
\item Arcs $v_D \to \tau$ of capacity $p$, one for each $D \in Q$.
\end{itemize}

The dictionary between $\mathcal{N}$ and the objects of the theorem is this. The vertices
$u_A$ correspond to the elements of $P$, that is to the rows of $R$, and the vertices $v_D$ to the elements of $Q$, its columns;
the middle arcs are exactly the cells of $R$, so an assignment of values to them is exactly a function vanishing off $R$; the capacity $q$ on $\sigma \to u_A$ is the required row sum of $w$ and the capacity
$p$ on $v_D \to \tau$ its required column sum. Precisely: given a flow $f$ of value $pq$,
put $w(A,D) = f(u_A \to v_D)$ for $(A,D) \in R$ and $w(A,D) = 0$ otherwise. The total
capacity leaving $\sigma$ is $pq$ and the total capacity entering $\tau$ is $pq$, so a flow
of value $pq$ saturates every arc at $\sigma$ and every arc at $\tau$. Conservation at
$u_A$ then reads $\sum_D w(A,D) = q$ and conservation at $v_D$ reads $\sum_A w(A,D) = p$,
which is \eqref{eq:norm}. Conversely a $w$ satisfying \eqref{eq:norm} defines in the same way a flow of value $pq$. So it suffices to produce a flow of value $pq$. 

No flow has value exceeding $pq$, the capacity of the cut $\{\sigma\}$; so the theorem applies, and it is enough to show that every cut has capacity at least $pq$.

Let $S$ be a cut and put $X = \{A \in P : u_A \in S\}$ and $Y = \{D \in Q : v_D \in S\}$.
If some $A \in X$ had a $D \in N(A)$ with $D \notin Y$, the arc $u_A \to v_D$ would run
from $S$ to its complement and the cut would have infinite capacity; so we may assume
$R(X) \subseteq Y$. The arcs leaving $S$ are then precisely the arcs $\sigma \to u_A$ with
$A \notin X$, of total capacity $q\,(p - |X|)$, together with the arcs $v_D \to \tau$ with
$D \in Y$, of total capacity $p\,|Y|$. Hence, using $R(X) \subseteq Y$ and then
\eqref{eq:crit},
\[
c(S) \;=\; q\,\bigl(p - |X|\bigr) + p\,|Y| \;\ge\; pq - q|X| + p\,|R(X)| \;\ge\; pq .
\]
Every cut therefore has capacity at least $pq$, so the maximum flow has value $pq$ and the
required weighting exists.
\end{proof}

\begin{example}\label{ex:fail}
The hypothesis of Theorem~A can fail. Let $P = \{p_1,p_2,p_3\}$, $Q = \{q_1,q_2\}$ and $R = \{(p_1,q_2),(p_2,q_2),(p_3,q_1)\}$, which has no empty row or column and so represents the structure matrix of a $0$-rectangular band (this is essentially \cite[Example 1.3]{H14}). The elements $(p_1,q_1)$ and $(p_2,q_1)$ of $P \times Q$ are each related in $\Gamma(R)$ only to $(p_3,q_2)$, since $M(q_1) = \{p_3\}$ and
$N(p_1) = N(p_2) = \{q_2\}$. Therefore no permutation is contained in $\Gamma(R)$, and the $\mathcal{D}$-class has no permutation matching.  Taking $X = \{p_1,p_2\}$ gives $R(X) = \{q_2\}$ and so
\[
|Q|\,|X| = 2\times 2 =  4 \;>\; 3 = 3 \times 1 = |P|\,|R(X)| ,
\] violating the inequality (1), in accord with Theorem \ref{thm:criterion}. 
\end{example}

\section{Permutation matchings for $O_n$ }
\label{sec:On}

A mapping $\alpha \in \On$ is determined by the pair $(A,B)$, where $A = \ker \alpha$ and $B = \im \alpha$, the kernel classes themselves being intervals. With this in view, let $D_r$ be the set of all members of $\On$ of rank $r$ and for $1 \le r \le n$, let $P_r$ be the set of ordered partitions $A = (A_1 < \cdots < A_r)$ of $[n]$ into $r$ non-empty intervals and $Q_r$ the set of $r$-subsets $B = \{b_1 < \cdots < b_r\}$ of $[n]$. Writing $A_i = [a_{i-1}+1,\,a_i]$ with $a_0 = 0$ and $a_r = n$, a member of $P_r$ may be identified with its set of (right hand) \emph{cut points}, $1 \le a_1 < \cdots < a_{r-1} \le n-1$, so $|P_r| = \binom{n-1}{r-1}$, while $|Q_r| = \binom{n}{r}$. In this way $D_r$ is in bijection with $P_r \times Q_r$, and so $(A,B)\in P_r \times Q_r$ represents a typical member of $D_r$.  

\begin{definition}
For $A \in P_r$ and $D = \{d_1 < \cdots < d_r\} \in Q_r$, we say that $D$ is a \emph{transversal} of $A$, written $A \Rel D$, if $d_i \in A_i$ for every $i$.
\end{definition}

It is readily checked  that $\mathcal{R}$- and $\mathcal{L}$-classes of $\On$ correspond to equality of kernels and images respectively, $\mathcal{H}$ is the equality relation, (i.e. $\On$ is combinatorial), while $\mathcal{D} = \mathcal{J}$ corresponds to equality of rank, and so $D_r$ is a typical $\mathcal{D}$-class of $\On$. Moreover, $\alpha = \alpha^2$ if and only if $\ker \alpha \Rel \im \alpha$, from which it follows that $\On$ is regular. 

Observe that $A \Rel D$ (noting that $R$ here does \emph{not} denote the right Green's relation) holds precisely when $|D \cap A_i| = 1$ for every $i$, and since the $A_i$ form an increasing sequence of intervals, the unique member of $D$ in $A_i$ is therefore the $i$-th smallest member of $D$. 
\begin{remark}{\label{rem:trans}}
The structure matrix $R$ of $D_r$ is defined by the transversal relation in that $(A,B)$ represents an idempotent exactly when $A \Rel B$; this is because a map of rank $r$ is idempotent, and so generates a $1$ at position $(A,B)$, precisely when it fixes each of the $r$ members of its image, that is when $b_i \in A_i$ for every $i$.
\end{remark}
 
\section*{The deletion incidence}\label{sec:del}

Fix $1 \le r \le n$, put $\Sigma = \{0,1,\dots,n-r\}$ and $m = |\Sigma| = n-r+1$; for $k \ge 0$ let $M_k$ be the set of weakly increasing $k$-tuples over $\Sigma$, equivalently of multisets $s$ of size $k$ drawn from $\Sigma$, so $|M_k| = \binom{m+k-1}{k}$ and $M_0$ is a single empty tuple. 
\footnote{$|M_k|$ counts solutions, in non-negative integers, of $x_1 + \cdots + x_m = k$, solved by the \emph{stars-and-bars} method.}

Write $|s|_v$ for the multiplicity of $v$ in $s$, and $s \uplus \{v\}$ for $s$ with one further copy of $v$ adjoined (whether or not it already has one). 

\begin{lemma}\label{lem:enc}
Let $A \in P_r$ have cut points $a_1 < \cdots < a_{r-1}$ and let $D = \{d_1<\cdots<d_r\} \in Q_r$. Put
\[
s(A) = \bigl(a_1-1,\ \dots,\ a_{r-1}-(r-1)\bigr), \qquad t(D) = \bigl(d_1-1,\ \dots,\ d_r-r\bigr).
\]
Then $s : P_r \to M_{r-1}$ and $t : Q_r \to M_r$ are bijections, with $|M_{r-1}| = \binom{n-1}{r-1}$ and $|M_r| = \binom{n}{r}$; and, writing $s(A) = (s_1,\dots,s_{r-1})$ and $t(D) = (t_1,\dots,t_r)$,
\[
A \Rel D \qquad\Longleftrightarrow\qquad t_1 \le s_1 \le t_2 \le s_2 \le \cdots \le s_{r-1} \le t_r .
\]
\end{lemma}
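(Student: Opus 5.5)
The plan is to verify the two bijections by the standard ``subtract the index'' shift, and then to translate the transversal condition coordinate by coordinate.

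\emph{The map $t$.} For $D=\{d_1<\cdots<d_r\}\subseteq[n]$ put $t_i=d_i-i$. Strict increase $d_i<d_{i+1}$, that is $d_{i+1}\ge d_i+1$, is equivalent to $t_i\le t_{i+1}$. The bounds $1\le d_1$ and $d_r\le n$ give $t_1\ge 0$ and $t_r\le n-r$. Hence $t(D)\in M_r$. The inverse $t_i\mapsto t_i+i$ sends any weakly increasing $r$-tuple over $\Sigma$ back to a strictly increasing $r$-tuple in $[1,n]$, so $t$ is a bijection.

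\emph{The map $s$.} The same argument applies to the cut points. The conditions $1\le a_1<\cdots<a_{r-1}\le n-1$ become $0\le s_1\le\cdots\le s_{r-1}\le n-r$, so $s$ is a bijection $P_r\to M_{r-1}$. The cardinalities then follow from $|P_r|=\binom{n-1}{r-1}$ and $|Q_r|=\binom nr$, both computed earlier. Alternatively, they can be checked directly from $|M_k|=\binom{m+k-1}{k}$ with $m=n-r+1$.

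\emph{The interlacing.} By definition $A\Rel D$ means $d_i\in A_i=[a_{i-1}+1,a_i]$ for each $i$, with $a_0=0$ and $a_r=n$. This is the chain
\[
a_{i-1}<d_i\le a_i \qquad (1\le i\le r).
\]
The end conditions $d_1>0$ and $d_r\le n$ hold automatically. So the condition reduces to two families of inequalities: $d_i\le a_i$ for $1\le i\le r-1$, and $a_{i-1}<d_i$ for $2\le i\le r$.

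Subtracting $i$ from both sides, the inequality $d_i\le a_i$ is equivalent to $t_i\le s_i$. For the other family, $a_{i-1}<d_i$ is equivalent to $a_{i-1}\le d_i-1$, and subtracting $i-1$ gives $s_{i-1}\le t_i$. Together these two families are exactly the interlacing chain $t_1\le s_1\le t_2\le\cdots\le s_{r-1}\le t_r$.

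The only point needing care is the off-by-one shift in this second family, where a strict inequality on integers becomes a weak one after the shift by $i-1$ rather than by $i$. The degenerate case $r=1$ is consistent: there are no $s_i$, the chain is vacuous, and indeed every singleton is a transversal of the one-block partition.
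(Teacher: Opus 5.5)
Your proof is correct and follows essentially the same route as the paper: the shift $x_i \mapsto x_i - i$ converting strict to weak increase gives both bijections, and the transversal condition $a_{i-1} < d_i \le a_i$, after discarding the automatic end conditions, becomes $t_i \le s_i$ together with $s_{i-1} \le t_i$ (via $a_{i-1} \le d_i - 1$), which is exactly the interlacing chain. The differences are only cosmetic: you index the second family by $2 \le i \le r$ rather than the paper's $a_i < d_{i+1}$ for $1 \le i \le r-1$, and you check the case $r=1$ explicitly.
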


\begin{proof}
The cut points of members of $P_r$ are exactly the sequences $1 \le a_1 < \cdots < a_{r-1} \le n-1$. Subtracting $i$ from $a_i$ converts strict increase into weak increase and is invertible, the bounds becoming $0 \le s_1$ and $s_{r-1} \le (n-1)-(r-1) = n-r$; so $s$ is a bijection onto $M_{r-1}$, of size $\binom{m+r-2}{r-1} = \binom{n-1}{r-1}$ as $m+r-2 = n-1$. In the same way $1 \le d_1 < \cdots < d_r \le n$ corresponds to $0 \le t_1 \le \cdots \le t_r \le n-r$, and $|M_r| = \binom{m+r-1}{r} = \binom{n}{r}$.

For the last part, $A \Rel D$ says $a_{i-1} < d_i \le a_i$ for $i = 1,\dots,r$. The extreme constraints $0 < d_1$ and $d_r \le n$ hold automatically. For $1 \le i \le r-1$, the condition $d_i \le a_i$ reads $t_i + i \le s_i + i$, that is $t_i \le s_i$; and $a_i < d_{i+1}$ reads $s_i + i < t_{i+1}+i+1$, that is $s_i \le t_{i+1}$. Together these give the displayed chain.
\end{proof}

\begin{definition}
The \emph{deletion incidence} is $e : M_{r-1}\times M_r \to \mathbb{Z}_{\ge0}$,
\[
e(s,t) \;=\; \#\bigl\{\, i \in \{1,\dots,r\} \;:\; t \text{ with its } i\text{-th entry deleted equals } s \,\bigr\} .
\]
\end{definition}

\begin{lemma}\label{lem:del}
For all $s \in M_{r-1}$ and $t \in M_r$:
\begin{enumerate}
\item if $e(s,t)>0$ then $s$ and $t$ interlace, in the sense of Lemma~\ref{lem:enc};
\item $\sum_{s \in M_{r-1}} e(s,t) = r$ for every $t \in M_r$;
\item $\sum_{t \in M_r} e(s,t) = n$ for every $s \in M_{r-1}$.
\end{enumerate}
\end{lemma}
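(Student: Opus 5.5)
Each part is a short count once $e(s,t)$ is re-expressed through multiplicities. For $t \in M_r$ and $v \in \Sigma$, deleting any one of the $|t|_v$ copies of $v$ from $t$ yields the same weakly increasing $(r-1)$-tuple, which we denote $t\setminus\{v\}$. Hence
\[
e(s,t) \;=\; \sum_{v\in\Sigma,\ t\setminus\{v\}=s} |t|_v ,
\]
and at most one $v$ contributes, since $t\setminus\{v\}$ determines $v$ as the unique value whose multiplicity drops. Equivalently, $e(s,t)=|t|_v$ if $t=s\uplus\{v\}$, and $e(s,t)=0$ if no such $v$ exists. Here $|t|_v=|s|_v+1$. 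This reformulation is the main tool for all three parts.

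For (1), suppose $t$ is obtained from $s$ by inserting $v$ at position $j$ of the sorted list. Then $t=(s_1,\dots,s_{j-1},v,s_j,\dots,s_{r-1})$ with $s_{j-1}\le v\le s_j$. Check the chain $t_1\le s_1\le t_2\le\cdots\le s_{r-1}\le t_r$ term by term. For $i<j$ we have $t_i=s_i$, and $s_i\le t_{i+1}$ holds since $t_{i+1}$ is either $s_{i+1}$ or $v\ge s_i$. For $i\ge j$ we have $t_{i+1}=s_i$, and $t_i\le s_i$ holds since $t_i$ is either $v\le s_j\le s_i$ or $s_{i-1}\le s_i$. The one point needing care is the bookkeeping at the insertion index $j$; nothing else is involved.

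For (2), fix $t$. Each $i\in\{1,\dots,r\}$ yields exactly one tuple $s$ obtained by deleting the $i$-th entry, and that $s$ lies in $M_{r-1}$. So $\sum_s e(s,t)$ counts the $r$ deletion positions once each, giving $r$.

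For (3), fix $s$. The tuples $t$ with $e(s,t)>0$ are exactly $s\uplus\{v\}$ for $v\in\Sigma$, and distinct $v$ give distinct $t$. Therefore
\[
\sum_t e(s,t)\;=\;\sum_{v\in\Sigma}\bigl(|s|_v+1\bigr)\;=\;(r-1)+m\;=\;(r-1)+(n-r+1)\;=\;n .
\]
This is the step where counting with multiplicity matters: the multiplicity weight exactly compensates for the varying number of distinct insertions, which is what makes the column sums constant. Once the reformulation above is in place, the identity $\sum_v |s|_v=r-1$ finishes it.
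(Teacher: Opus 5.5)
Your proof is correct and is essentially the paper's: parts (2) and (3) are the same counts, and your identity $e(s,s\uplus\{v\})=|s|_v+1$ is exactly the paper's observation that the positions whose deletion leaves $s$ are those carrying the value $v$. Your part (1) is the paper's index check, phrased as inserting $v$ into $s$ rather than deleting the $i$-th entry of $t$.
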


\begin{proof}
(1) Write $t = (t_1 \le \cdots \le t_r)$ and let $s = (s_1 \le \cdots \le s_{r-1})$ arise by deleting the $i$-th entry, so $s_j = t_j$ for $j<i$ and $s_j = t_{j+1}$ for $j \ge i$. We check $t_j \le s_j \le t_{j+1}$ for $1 \le j \le r-1$. If $j<i$ then $t_j \le s_j$ holds with equality, while $s_j = t_j \le t_{j+1}$ as $t$ is weakly increasing. If $j \ge i$ then $t_j \le t_{j+1} = s_j$ as $t$ is weakly increasing, while $s_j \le t_{j+1}$ holds with equality.

(2) Deleting any one of the $r$ entries of $t$ leaves some member of $M_{r-1}$, so the sum counts the positions of $t$.

(3) Fix $s$, so $\sum_{v\in\Sigma} |s|_v = r-1$. A multiset $t$ contributes only if deleting some entry of $t$ leaves $s$, hence only if $t = s \uplus \{v\}$ for some $v \in \Sigma$; distinct $v$ give distinct $t$, so these contributions do not overlap. For $t = s\uplus\{v\}$ the positions whose deletion leaves $s$ are exactly those carrying the value $v$, of which there are $|s|_v + 1$. Hence
\[
\sum_{t \in M_r} e(s,t) \;=\; \sum_{v\in\Sigma}\bigl(|s|_v  + 1\bigr) \;=\; (r-1)+|\Sigma| \;=\; (r-1)+(n-r+1) \;=\; n ,
\]
which does not depend on $s$.
\end{proof}

As a check on the degrees, $r|M_r| = r\binom nr = n\binom{n-1}{r-1} = n|M_{r-1}|$, as bi-regularity requires. It is the multiplicity that makes the count in (3) constant: a repeated value of $s$ is reachable by more deletions, compensated exactly because $\sum_{v \in \Sigma} \bigl(|s|_v + 1\bigr) =
(r-1) + |\Sigma|$ does not depend on $s$. The underlying $0$-$1$ relation, interlacing itself, is in general not bi-regular, as seen in the next example, which is why the incidence must be counted with multiplicity. 

\begin{example}
    
When $n=3$, $r=2$ we have $\Sigma=\{0,1\}$, and $e$ is the following
array, with rows indexed by $M_{r-1} = \{(0),(1)\}$ and columns by
$M_r = \{(0,0),(0,1),(1,1)\}$, margins shown:
\[
\begin{array}{c|ccc|c}
      & (0,0) & (0,1) & (1,1) & \\\hline
(0)   & 2     & 1     & 0     & 3\\
(1)   & 0     & 1     & 2     & 3\\\hline
      & 2     & 2     & 2     &
\end{array}
\]
The two zeros in the array indicate where pairs fail to interlace: deleting an entry of $(1,1)$ yields $(1)$, never $(0)$, and similarly deleting from $(0,0)$ cannot yield a $(1)$.

\end{example}

\begin{corollary}\label{cor:weighting}
For each $r$ the transversal relation $R \subseteq P_r \times Q_r$ admits the balanced weighting $w_r(A,D) = e\bigl(s(A),t(D)\bigr)$, with constants $a = n$ and $b = r$.
\end{corollary}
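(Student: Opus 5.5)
The plan is to transport the deletion incidence along the bijections $s$ and $t$ of Lemma~\ref{lem:enc} and then check the three clauses of Definition~\ref{def:weighting} directly, using Lemma~\ref{lem:del}.

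First, non-negativity. The function $w_r(A,D) = e\bigl(s(A),t(D)\bigr)$ takes values in $\mathbb{Z}_{\ge 0}$, so it is a non-negative function on $P_r \times Q_r$.

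Second, $w_r$ vanishes off $R$. If $w_r(A,D)>0$ then $e(s(A),t(D))>0$. By Lemma~\ref{lem:del}(1), $s(A)$ and $t(D)$ then interlace, $t_1 \le s_1 \le t_2 \le \cdots \le s_{r-1} \le t_r$. By the final equivalence of Lemma~\ref{lem:enc} this says $A \Rel D$. By Remark~\ref{rem:trans}, $A \Rel D$ is exactly the statement that $(A,D)$ is a cell of the structure matrix $R$.

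Third, the margins. Because $s:P_r\to M_{r-1}$ and $t:Q_r\to M_r$ are bijections, sums over $P_r$ and $Q_r$ can be reindexed as sums over $M_{r-1}$ and $M_r$:
\[
\sum_{D \in Q_r} w_r(A,D) = \sum_{t \in M_r} e\bigl(s(A),t\bigr) = n
\]
by Lemma~\ref{lem:del}(3), and
\[
\sum_{A\in P_r} w_r(A,D) = \sum_{s\in M_{r-1}} e\bigl(s,t(D)\bigr) = r
\]
by Lemma~\ref{lem:del}(2). Both constants are positive, so $w_r$ is a balanced weighting with $a=n$ and $b=r$.

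Finally, the boundary case $r=1$. Here $M_0$ is a single empty tuple and $P_1$ has one element. One checks that $e(\emptyset,t)=1$ for every $t\in M_1$: a one-entry $t$ has a single position, and deleting it leaves the empty tuple. The interlacing condition is then vacuous, so Lemma~\ref{lem:del} still applies as stated and the argument above goes through unchanged.

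There is no real obstacle, since all the substance is in Lemma~\ref{lem:del}. The only care needed is in the second step: the interlacing in Lemma~\ref{lem:del}(1) must be matched with the orientation of $R$ (rows indexed by kernels $A$, columns by images $D$) fixed in \S1 and Remark~\ref{rem:trans}. Once that is checked, the row sum is $n$ and the column sum is $r$, and the constants satisfy $n|P_r| = r|Q_r|$ as they must.
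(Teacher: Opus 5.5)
Your proposal is correct and is essentially the paper's proof. You get vanishing off $R$ from Lemma~\ref{lem:del}(1) combined with Lemma~\ref{lem:enc}, row sums $n$ from Lemma~\ref{lem:del}(3) and column sums $r$ from Lemma~\ref{lem:del}(2), reindexing through the bijections $s$ and $t$; your separate check of $r=1$ is a correct extra detail the paper leaves implicit.
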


\begin{proof}
By Lemma~\ref{lem:del}(1) with Lemma~\ref{lem:enc}, $w_r$ vanishes off $R$; by Lemma~\ref{lem:del}(3) each row sum is $n$ and by Lemma~\ref{lem:del}(2) each column sum is $r$, the bijections of Lemma~\ref{lem:enc} matching the index sets.
\end{proof}

\section*{Proof of Theorem B} 
\begin{proof}
By Corollary 3.6 the structure matrix of $D_r$, which by Definition 3.1 and Remark~\ref{rem:trans} is the transversal relation $R \subseteq P_r \times Q_r$,
admits a balanced weighting. Hence by Theorem~A each $D_r$ has a permutation
matching, and since the inverses of any element lie in its own
$\mathcal{D}$-class, these combine to give a permutation matching of
$\mathcal{O}_n$.
\end{proof}

\medskip

ACKNOWLEDGEMENT The paper was written in conjunction with the LLM Claude (Anthropic), Opus 5/5.5.

\end{document}